\documentclass[12pt, a4paper,twoside]{article}

\usepackage[lmargin=3cm,rmargin=2.1cm,tmargin=2.4cm,bmargin=2.4cm]{geometry}
\usepackage{amsmath}
\usepackage{amsfonts}
\usepackage{amssymb}
\usepackage{amsthm}
\usepackage{ragged2e}
\usepackage[utf8]{inputenc}
\usepackage[english]{babel}
\usepackage{graphicx}
 \usepackage{parskip}
\usepackage{textpos}
\usepackage[utf8]{inputenc}
\usepackage[T1]{fontenc}
\usepackage{helvet}
\usepackage{amsfonts}
\usepackage{amsmath}
\usepackage{amssymb}
\usepackage{tikz, graphicx}
\usepackage{hyperref}
\usepackage{algorithm}
\usepackage{algorithmic}

\newtheorem{theorem}{Theorem}[section]

\newtheorem{definition}[theorem]{Definition}
\newtheorem{corollary}[theorem]{Corollary}

\newtheorem{remark}[theorem]{Remark}

\newtheorem{proposition}[theorem]{Proposition}

\usepackage{float}
\usepackage{graphicx} 
\numberwithin{equation}{section}
 \begin{document}

\begin{center}
{\large \textbf{{Taylor-Accelerated Neural Network Interpolation Operators on Irregular Grids with Higher Order Approximation}
}}
\end{center}
\vspace{0.25cm}
\begin{center}
\textbf{Sachin Saini}
\end{center}

\begin{abstract}
In this paper, a new class of \emph{Taylor-accelerated neural network interpolation operators} is introduced on quasi-uniform irregular grids. These operators improve existing neural network interpolation operators by incorporating Taylor polynomials at the sampling nodes, thereby exploiting higher smoothness of the target function. The proposed operators are shown to be well defined, uniformly bounded, and to satisfy an exact interpolation property at the grid points. In addition, polynomial reproduction up to a prescribed degree is established. Jackson-type approximation estimates are derived in terms of higher-order moduli of smoothness, yielding enhanced convergence rates for sufficiently smooth functions. Numerical experiments are presented to support the theoretical analysis and to demonstrate the significant accuracy improvement achieved through the Taylor-accelerated construction. In particular, higher-order convergence on irregular grids is obtained, and the proposed approach outperforms existing neural network interpolation operators on irregular grids, including Lagrange-based schemes.
\end{abstract}

\textbf{Keyword:}
Sigmoidal function, NNOs, Taylor approximation, Uniform approximation, Jackson-type estimates.

\textbf{MSC 2020} 41A25, 41A30, 41A46, 41A58, 47A58,  92B20.

\section{Introduction and Preliminaries}

Neural networks are widely recognized for their remarkable universal
approximation capabilities and have become fundamental tools in applied
mathematics, scientific computing, and numerical analysis. Beyond the
classical training-based paradigm, the theory of \emph{neural network
operators} (NNOs) provides a constructive approximation framework in which
neural network architectures are interpreted as linear operators acting on
function spaces. This operator-based viewpoint allows one to derive
explicit convergence theorems together with quantitative error bounds in
the spirit of classical approximation theory.

The systematic study of NNOs was initiated by Cardaliaguet and
Euvrard~\cite{cardaliaguet1992approximation}, who introduced bell-shaped
and squashing-type NNOs and established early connections
between neural network structures and polynomial-type approximation
processes. Subsequently, Anastassiou and his collaborators
\cite{anastassiou1997rate,anastassiou2000rate,anastassiou2011multivariatesigmoidal}
developed a broad approximation theory for sigmoidal-based neural network
operators, including multivariate extensions and convergence rates in
various normed settings. Afterward, several authors investigated NNOs using different activation functions and normed spaces for the functions under approximation~\cite{costarelli2014convergence,costarelli2022density,costarelli2022quantitative-esti.Normalized-and-kantoro.,costarelli2024Durrmeyer-approximation,sharma2024some}.

On the other hand, existing NNOs often achieve
only low-order approximation, since their error is typically governed by
the first modulus of continuity even when the target function possesses
additional smoothness. This motivates the problem of designing neural network operator capable of exploiting higher regularity and
attaining improved convergence orders. To address this limitation, Kadak\cite{kadak2025hermite} recently proposed Hermite-type NNOs incorporating derivative information of the approximated function on uniform grids.

In recent years, increasing attention has been devoted to
interpolation-type NNOs, which aim to reconstruct functions directly from sampled data. One of the first constructive interpolation operators within the NNOs framework was proposed by Costarelli~\cite{costarelli2014interpolation-ramp}, and later extended
to multivariate settings in~\cite{costarelli2015interpolation-ramp-multi}.
Further developments include interpolation operators activated by smooth
ramp functions, providing quantitative rates in terms of moduli of
continuity and smoothness (see, e.g.,
\cite{qian2022rates-interpolation-A(m)-class,wang2023Multi-INNOS-on-A(m)class,li2019constructive,qian2022smooth-neural}).

However, most existing neural network interpolation constructions are developed
on uniform grids, whereas in many practical applications sampling nodes
arise naturally in an irregular and non-uniform manner. Addressing this
issue, Costarelli et.al.~\cite{costarelli2025higher},  recently introduced a new family of neural network interpolation operators on irregular grids,
based on Lagrange polynomials and higher-order moduli of smoothness.

Inspired by higher-order sampling methods and local polynomial
approximation techniques, the present work introduces a new class of
Taylor-accelerated neural network interpolation operators on
irregular grids. The main idea is to enrich sampling-based neural network
interpolation operators by replacing pointwise function values with local
Taylor polynomials of degree $r$, thereby incorporating derivative
information into the approximation mechanism. As a result, the proposed
operators preserve exact interpolation at the grid nodes while achieving
approximation order $r+1$ for sufficiently smooth functions, with error
estimates expressed in terms of higher-order moduli of smoothness.

\medskip

Now, we collect some definitions and tools that will be used
throughout the paper. In particular, we recall the functional setting,
the notion of irregular grids, the class of compactly supported
sigmoidal activation, and higher-order moduli of smoothness.

Throughout the paper, we consider real-valued functions defined on the
compact interval $[c,d]\subset\mathbb{R}$.
\medskip
\noindent

\textbf{Space of continuous functions.}
We denote by $C[c,d]$ the Banach space of all continuous functions
$g:[c,d]\to\mathbb{R}$, endowed with the uniform norm
\[
\|g\|_\infty := \max_{y\in[c,d]} |g(y)|.
\]
\medskip
\noindent

\textbf{Space of smooth functions.}
For an integer $r\ge 1$, we denote by $C^r[c,d]$ the space of all
functions whose derivatives up to order $r$ exist and are continuous on
$[c,d]$, namely,
\[
g^{(j)} \ \text{exists and is continuous on } [c,d],
\qquad j=0,1,\dots,r.
\]
Equipped with the norm
\[
\|g\|_{C^r}:=\sum_{j=0}^{r}\|g^{(j)}\|_\infty,
\]
the space $C^r[c,d]$ is a Banach space continuously embedded into
$C[c,d]$.

\medskip
Since the Taylor-accelerated neural network interpolation operators introduced in this work employ local Taylor polynomials of degree $r$, we shall mainly assume $g\in C^r[c,d]$. Moreover, in the derivation of quantitative error estimates we will require the additional smoothness condition
$g\in C^{r+1}[c,d]$.

\medskip

\noindent
\textbf{Irregular grids.} Let
\(
c=z_0<z_1<\cdots<z_n=d
\)
be a partition of the interval $[c,d]$. In contrast to uniform grids, the
nodes $\{z_k\}_{k=0}^n$ are allowed to be distributed in a non-uniform
manner.

We assume that the mesh sizes satisfy the controlled irregularity
condition
\[
0<d_1 \le z_{k+1}-z_k \le d_2,
\qquad k=0,1,\dots,n-1,
\]
together with the quasi-uniformity condition
\[
d_2 < 2d_1.
\]
This hypothesis guarantees that the irregular grid does not exhibit
excessively large variations in the step sizes and is crucial for the
exact interpolation property of the proposed operators
(cf.~\cite{costarelli2025higher}).

\begin{definition}
A measurable function $\eta:\mathbb{R}\to\mathbb{R}$ is called sigmoidal if
\[
\lim_{y\to-\infty}\eta(y)=0,
\qquad
\lim_{y\to+\infty}\eta(y)=1.
\]
\end{definition}
In this work, we restrict ourselves to sigmoidal functions belonging to
the class $D(m)$, $m\in\mathbb{R}^+$, defined as follows: a sigmoidal
function $\eta:\mathbb{R}\to\mathbb{R}$ belongs to $D(m)$ if

\begin{enumerate}
\item $\eta$ is increasing;
\item $\eta(y)=0$ for $y\le -m$ and $\eta(y)=1$ for $y\ge m$;
\item $0<\eta(y)<1$ for $-m<y<m$.
\end{enumerate}

Associated with $\eta\in D(m)$, we define the activation function
$\sigma:\mathbb{R}\to\mathbb{R}$ by
\[
\sigma(y):=\eta(y+m)-\eta(y-m).
\]

The activation function $\sigma$ satisfies the following fundamental
properties:

\begin{itemize}
\item[\textbf{(M1)}]
$\sigma(y)\ge 0$ for all $y\in\mathbb{R}$.

\item[\textbf{(M2)}]
$\sigma$ is increasing on $(-\infty,0]$ and decreasing on $[0,\infty)$.

\item[\textbf{(M3)}]
$\sigma$ has compact support, namely
\[
\sigma(y)=0,
\qquad |y|\ge 2m.
\]

\item[\textbf{(M4)}]
\(
\sigma(y)+\sigma(y-2m)=1,
\qquad y\in[0,2m].
\)

\item[\textbf{(M5)}] \(
\sigma(0)=1,
\qquad
\sigma\!\left(\frac{2m}{d_1}(z_j-z_k)\right)=0,
\quad j\neq k.
\)

Indeed, since the grids $\{z_k\}_{k=0}^n$ is strictly increasing and
satisfies $z_{k+1}-z_k\ge d_1$ for all $k$, it follows that
\[
|z_j-z_k|\ge d_1,
\qquad j\neq k.
\]
Consequently,
\[
\left|\frac{2m}{d_1}(z_j-z_k)\right|
\ge
\frac{2m}{d_1}\,|z_j-z_k|
\ge
2m.
\]
Hence, by the compact support property \textbf{(M3)}, we obtain
\[
\sigma\!\left(\frac{2m}{d_1}(z_j-z_k)\right)=0,
\qquad j\neq k.
\]
\end{itemize}

For further details on sigmoidal functions of class $D(m)$ and their
properties, we refer the reader to~\cite{costarelli2025higher}.

\begin{definition}
Let $g\in C[c,d]$ and $\delta>0$. The $(r+1)$-th modulus of smoothness of
$g$ is defined by
\[
\omega_{r+1}(g,\delta):=
\sup_{|h|\le\delta}
\|\Delta_h^{\,r+1}g\|_\infty,
\]
where $\Delta_h^{\,r+1}$ denotes the forward difference operator of order
$r+1$, taken whenever all shifted points remain in $[c,d]$.
\end{definition}

Moreover, it is well known that if $g\in C^{r+1}[c,d]$, then
\[
\omega_{r+1}(g,\delta)
\le
C_r\,\delta^{r+1}\|g^{(r+1)}\|_\infty,
\]
where $C_r>0$ depends only on $r$~(see, e.g., \cite{devore1993constructive, timan2014theory, ditzian2012moduli}).

\section{Taylor-Accelerated Neural Network Interpolation Operator on Irregular Grids}

In this section, we introduce a Taylor-type neural network interpolation operator constructed on irregular grids. The main idea is to enhance a neural network interpolation operator by incorporating higher-order
Taylor information at the grid nodes, while preserving the exact nodal interpolation property.

Let $g\in C^{r}[c,d]$, where $r\ge 1$ is a fixed integer. For each node $z_k$ of the irregular grid, we consider the Taylor polynomial of degree $r$ centred at $z_k$, defined by
\[
P_r(g;y,z_k)
:=
\sum_{j=0}^{r}
\frac{g^{(j)}(z_k)}{j!}(y-z_k)^j,
\qquad y\in[c,d].
\]

\begin{definition}
Let $g\in C^{r}[c,d]$. Then Taylor-accelerated neural network interpolation
operator is defined by
\[
\widetilde{T}_{n,r}(g,y)=
\frac{
\displaystyle\sum_{k=0}^{n}
\sigma\!\left(\frac{2m}{d_1}(y-z_k)\right)
P_r(g;y,z_k)
}{
\displaystyle\sum_{k=0}^{n}
\sigma\!\left(\frac{2m}{d_1}(y-z_k)\right)
},
\qquad y\in[c,d],
\]
where $\sigma$ is the compactly supported activation function introduced
in Section~1 and $d_1$ denotes the minimum mesh size of the irregular
grids.
\end{definition}

Now, first we will show the well-definedness and boundedness of the operator $\widetilde{T}_{n,r}(\cdot) $.

\begin{proposition}\label{prop:well_defined}
For every $g\in C^{r}[c,d]$ and every $y\in[c,d]$, the operator
$\widetilde{T}_{n,r}(g,y)$ is well defined.
\end{proposition}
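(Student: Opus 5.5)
The claim reduces to showing that the denominator
\[
S_n(y):=\sum_{k=0}^{n}\sigma\!\left(\frac{2m}{d_1}(y-z_k)\right)
\]
is strictly positive for every $y\in[c,d]$. The numerator is a finite sum of finite quantities, since $g\in C^r[c,d]$ makes every $g^{(j)}(z_k)$ finite, so positivity of $S_n(y)$ is all that is needed. By \textbf{(M1)} every term of $S_n(y)$ is nonnegative, so it suffices to exhibit one index $k$ with $\sigma\big(\frac{2m}{d_1}(y-z_k)\big)>0$.

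Fix $y\in[c,d]$. If $y$ is a node, say $y=z_k$, then \textbf{(M5)} gives $\sigma(0)=1$, so $S_n(y)\ge 1>0$. Otherwise $y\in(z_k,z_{k+1})$ for a unique $k\in\{0,\dots,n-1\}$. Choose the nearer endpoint, say $z_k$. Then $|y-z_k|\le \frac{z_{k+1}-z_k}{2}\le \frac{d_2}{2}<d_1$, where the last inequality uses the quasi-uniformity hypothesis $d_2<2d_1$. Consequently
\[
\left|\frac{2m}{d_1}(y-z_k)\right|<2m .
\]

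The remaining step is to show that $\sigma(t)>0$ for $|t|<2m$. Since $\sigma$ is even in its behavior by \textbf{(M2)}, I would argue directly from $\sigma(t)=\eta(t+m)-\eta(t-m)$. Let $|t|<2m$. If $t\ge0$, then $t-m<m$ and $t+m\ge m$, so $\eta(t+m)=1$ and $\eta(t-m)<1$. This uses property 3 of $D(m)$ when $t-m>-m$, and $\eta(t-m)=0$ when $t-m\le -m$. In either case $\sigma(t)>0$. The case $t<0$ is symmetric: $t+m>-m$ gives $\eta(t+m)>0$, while $t-m<-m$ gives $\eta(t-m)=0$. Hence $S_n(y)>0$, and $\widetilde{T}_{n,r}(g,y)$ is a well-defined real number.

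The only delicate point is the strict positivity of $\sigma$ on the open interval $(-2m,2m)$, which needs the strict inequalities in the definition of $D(m)$. The halving argument at the nearer endpoint is where $d_2<2d_1$ enters. In fact even $|y-z_k|\le d_2/2\le d_1$ together with a boundary check would suffice, but the strict inequality avoids the endpoint $|t|=2m$, where $\sigma$ vanishes.
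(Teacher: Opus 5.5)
Your proof is correct and follows the same route as the paper: take the nearer endpoint $z_k$ of the cell containing $y$, get $|y-z_k|\le d_2/2<d_1$ from $d_2<2d_1$, and use strict positivity of $\sigma$ on $(-2m,2m)$, which you verify more carefully than the paper does. Only your closing aside is wrong: under the non-strict condition $d_2\le 2d_1$ the denominator can vanish, since at the midpoint of a cell of length $2d_1$ both neighbouring terms equal $\sigma(\pm 2m)=0$, so the strict inequality is genuinely necessary and not merely convenient.
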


\begin{proof}
Consider the denominator
\[
D(y):=
\sum_{k=0}^{n}
\sigma\!\left(\frac{2m}{d_1}(y-z_k)\right),
\qquad y\in[c,d].
\]

By the compact support property \textbf{(M3)} of $\sigma$, only finitely
many terms in the above sum are nonzero for each fixed $y\in[c,d]$.

Fix $y\in[c,d]$. Since $\{z_k\}_{k=0}^n$ is a partition of $[c,d]$, there
exists an index $i\in\{0,\dots,n-1\}$ such that $y\in[z_i,z_{i+1}]$.
Consequently,
\[
\min\{|y-z_i|,|y-z_{i+1}|\}
\le
\frac{z_{i+1}-z_i}{2}
\le
\frac{d_2}{2}.
\]
Let $k_0\in\{i,i+1\}$ be such that
\[
|y-z_{k_0}|\le \frac{d_2}{2}.
\]

Using the mesh regularity condition and the assumption $d_2<2d_1$, we obtain
\[
\left|\frac{2m}{d_1}(y-z_{k_0})\right|
\le
\frac{m d_2}{d_1}
<
2m.
\]
Hence,
\[
\frac{2m}{d_1}(y-z_{k_0})\in(-2m,2m).
\]

Since $\eta\in D(m)$ satisfies $0<\eta(y)<1$ for $-m<y<m$, it follows that
$\sigma(y)>0$ for all $y\in(-2m,2m)$. Therefore,
\[
\sigma\!\left(\frac{2m}{d_1}(y-z_{k_0})\right)>0.
\]

Consequently,
\[
D(y)
\ge
\sigma\!\left(\frac{2m}{d_1}(y-z_{k_0})\right)
>0,
\qquad y\in[c,d].
\]

Thus, the denominator in the definition of
$\widetilde{T}_{n,r}(g,y)$ is strictly positive for all $y\in[c,d]$, and
the operator $\widetilde{T}_{n,r}(g,y)$ is well defined.
\end{proof}

\begin{theorem}
The operator $\widetilde{T}_{n,r}$ is uniformly bounded on $C^{r}[c,d]$.
More precisely, there exists a constant $C_r>0$, depending only on $r$,
the activation function $\sigma$, and the mesh parameters $d_1,d_2$, such that
\[
\|\widetilde{T}_{n,r}(g)\|_\infty
\le
C_r\,\|g\|_{C^{r}},
\qquad g\in C^{r}[c,d].
\]
\end{theorem}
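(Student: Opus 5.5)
The plan is to view $\widetilde{T}_{n,r}(g,y)$ as a convex combination of the values $P_r(g;y,z_k)$ and to control these values only for the nodes that actually contribute at the given point $y$. Write
\[
\phi_k(y):=\frac{\sigma\!\left(\frac{2m}{d_1}(y-z_k)\right)}{D(y)},
\]
where $D(y)$ is the denominator from Proposition~\ref{prop:well_defined}. That proposition gives $D(y)>0$, and \textbf{(M1)} gives $\sigma\ge0$. Hence $\phi_k(y)\ge 0$ and $\sum_k\phi_k(y)=1$, so
\[
|\widetilde{T}_{n,r}(g,y)|\le \max\{|P_r(g;y,z_k)| : \phi_k(y)\neq 0\}.
\]
Note that we never need a lower bound on $D(y)$ beyond positivity. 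This is the key simplification: no uniform constant depending on $\sigma$ enters through the normalization.

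Next I will use the compact support \textbf{(M3)}. If $\phi_k(y)\neq0$, then $\left|\frac{2m}{d_1}(y-z_k)\right|<2m$, that is, $|y-z_k|<d_1$. For such $k$, each term of the Taylor polynomial satisfies
\[
\left|\frac{g^{(j)}(z_k)}{j!}(y-z_k)^j\right|\le \frac{d_1^j}{j!}\,\|g^{(j)}\|_\infty .
\]
Summing over $j=0,\dots,r$ gives
\[
|P_r(g;y,z_k)|\le \max_{0\le j\le r}\frac{d_1^j}{j!}\;\sum_{j=0}^r\|g^{(j)}\|_\infty=\max_{0\le j\le r}\frac{d_1^j}{j!}\,\|g\|_{C^r}.
\]
One can equally use $|y-z_k|\le d_2$ or $\le d-c$ if a cruder bound is preferred.

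Combining the two steps gives the stated estimate with
\[
C_r:=\max_{0\le j\le r}\frac{d_1^j}{j!}\le \max\{1,d_1^r\}.
\]
This constant depends only on $r$ and the mesh parameter, uniformly in $n$ and $y$. Taking the supremum over $y\in[c,d]$ then finishes the proof.

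There is no serious obstacle. The only point to handle carefully is that the convex-combination argument needs $D(y)>0$ and $\sigma\ge0$ at every $y$, which Proposition~\ref{prop:well_defined} and \textbf{(M1)} supply. The locality from \textbf{(M3)} is what keeps $|y-z_k|$, and hence the constant, independent of $n$.
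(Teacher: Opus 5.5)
Your proof is correct. It follows the paper's proof except at the normalization step.

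\textbf{Shared part.} Both arguments localize via \textbf{(M3)} to nodes with $|y-z_k|<d_1$ and bound $|P_r(g;y,z_k)|$ term by term.

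\textbf{The paper's route.} It passes to $|\widetilde{T}_{n,r}(g,y)|\le M_r\|g\|_{C^r}/D(y)$, with $M_r=\sum_{j=0}^r d_1^j/j!$. It then divides by a lower bound $D(y)\ge c_\sigma:=\sigma(md_2/d_1)$ taken from the proof of Proposition~\ref{prop:well_defined}, giving $C_r=M_r/c_\sigma$.

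\textbf{Your route.} The convex-combination view shows this detour is unnecessary. Inserting the Taylor bound into the normalized average already gives $M_r\|g\|_{C^r}$, or your sharper $\max_{0\le j\le r}(d_1^j/j!)\,\|g\|_{C^r}$, using only $D(y)>0$. It also sidesteps two tacit points in the paper's version:
\begin{itemize}
\item The display with $D(y)$ in the denominator silently needs $D(y)\le 1$. This is true via \textbf{(M2)}--\textbf{(M4)}, but the paper does not argue it.
\item $\sigma(md_2/d_1)$ is a valid lower bound for $D$ only if $\sigma$ is even, which the class $D(m)$ does not guarantee. In general one needs $\min\{\sigma(md_2/d_1),\sigma(-md_2/d_1)\}$.
\end{itemize}

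\textbf{What each approach buys.} Your constant depends only on $r$ and $d_1$, and equals $1$ once $d_1\le 1$. It is independent of $\sigma$ and $d_2$, so it does not blow up as $d_2/d_1\to 2$. By contrast, $1/c_\sigma$ does blow up there for continuous $\eta$, such as the ramp sigmoid of Section~3. The paper's route does give an explicit positive lower bound on the denominator. That bound could be reused, e.g.\ when differentiating or perturbing the normalized weights, but this boundedness statement does not need it.
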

\begin{proof}
Let $g\in C^{r}[c,d]$ and fix $y\in[c,d]$. By the definition of
$\widetilde{T}_{n,r}$ and the non-negativity of $\sigma$, we have
\begin{equation}\label{eq:basic_bound}
|\widetilde{T}_{n,r}(g,y)|
\le
\frac{
\sum\limits_{k=0}^{n}
\sigma\!\left(\frac{2m}{d_1}(y-z_k)\right)
\,|P_r(g;y,z_k)|
}{
\sum\limits_{k=0}^{n}
\sigma\!\left(\frac{2m}{d_1}(y-z_k)\right)
}.
\end{equation}

Recalling that
\[
P_r(g;y,z_k)
=
\sum_{j=0}^{r}
\frac{g^{(j)}(z_k)}{j!}(y-z_k)^j,
\]
we obtain
\begin{equation}\label{eq:taylor_est}
|P_r(g;y,z_k)|
\le
\sum_{j=0}^{r}
\frac{\|g^{(j)}\|_\infty}{j!}\,|y-z_k|^{j}.
\end{equation}

By the compact support property \textbf{(M3)} of $\sigma$, if
\[
\sigma\!\left(\frac{2m}{d_1}(y-z_k)\right)\neq 0,
\]
then
\[
|y-z_k|< d_1.
\]
So, for all $j=0,1,\dots,r$,
\[
|y-z_k|^j \le d_1^{\,j}.
\]

Combining this with \eqref{eq:taylor_est}, we obtain
\[
|P_r(g;y,z_k)|
\le
\left(\sum_{j=0}^{r}\frac{d_1^{\,j}}{j!}\right)
\|g\|_{C^{r}}
=: M_r\,\|g\|_{C^{r}}.
\]

Substituting into \eqref{eq:basic_bound} yields
\[
|\widetilde{T}_{n,r}(g,y)|
\le
\frac{M_r}{\sum\limits_{k=0}^n
\sigma\!\left(\frac{2m}{d_1}(y-z_k)\right)}
\|g\|_{C^{r}}.
\]

By Proposition~\ref{prop:well_defined}, there exists a constant
\[
c_\sigma := \sigma\!\left(\frac{m d_2}{d_1}\right) > 0
\]
such that
\[
\sum_{k=0}^n
\sigma\!\left(\frac{2m}{d_1}(y-z_k)\right)
\ge c_\sigma,
\qquad y\in[c,d].
\]

Therefore,
\[
|\widetilde{T}_{n,r}(g,y)|
\le
\frac{M_r}{c_\sigma}\,\|g\|_{C^{r}},
\qquad y\in[c,d].
\]

Taking the supremum over $y$ completes the proof.
\end{proof}

\begin{remark}
The uniform boundedness of $\widetilde{T}_{n,r}$ ensures the stability of
the Taylor-accelerated neural network interpolation operator. In
particular, small perturbations of $g$ in the $C^{r}$-norm lead to
controlled perturbations in the approximation
$\widetilde{T}_{n,r}(g)$.
\end{remark}

\subsection{Approximation Estimate}

In this subsection, we study the approximation properties of the Taylor-accelerated neural network interpolation operator $\widetilde{T}_{n,r}$. In particular, we establish its polynomial reproduction and exact interpolation properties, and derive Jackson-type approximation estimates in terms of the modulus of smoothness. Several corollaries are also presented to illustrate the
convergence behavior under additional regularity assumptions.

\begin{proposition}[Polynomial Reproduction]
Let $r\ge 1$ and let $p$ be a polynomial of degree at most $r$. Then the
Taylor-accelerated neural network interpolation operator reproduces $p$
exactly, namely,
\[
\widetilde{T}_{n,r}(p,y)=p(y),
\qquad \forall\,y\in[c,d].
\]
\end{proposition}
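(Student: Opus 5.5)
The plan is to show that for a polynomial $p$ of degree at most $r$, every local Taylor polynomial $P_r(p;y,z_k)$ coincides identically with $p$. The operator then collapses to a normalized average of a single value, and the normalization makes that average equal to $p(y)$.

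\emph{Step 1 (Taylor expansion is exact).} Since $p$ is a polynomial of degree at most $r$, we have $p\in C^{r}[c,d]$ (indeed $C^\infty$), and $p^{(j)}\equiv 0$ for $j>r$. Taylor's formula with Lagrange (or integral) remainder of order $r+1$ then gives, for every $y\in[c,d]$ and every node $z_k$,
\[
p(y)=\sum_{j=0}^{r}\frac{p^{(j)}(z_k)}{j!}(y-z_k)^j+\frac{p^{(r+1)}(\xi)}{(r+1)!}(y-z_k)^{r+1}=P_r(p;y,z_k).
\]
Equivalently, one can argue algebraically: expanding $p$ in powers of $(y-z_k)$ gives coefficients equal to $p^{(j)}(z_k)/j!$, and the expansion terminates at degree $r$. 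So $P_r(p;y,z_k)=p(y)$ for all $k=0,\dots,n$ and all $y$.

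\emph{Step 2 (substitute into the operator).} Inserting this identity into the definition of $\widetilde{T}_{n,r}$ gives
\[
\widetilde{T}_{n,r}(p,y)=\frac{\sum_{k=0}^{n}\sigma\!\left(\frac{2m}{d_1}(y-z_k)\right)p(y)}{\sum_{k=0}^{n}\sigma\!\left(\frac{2m}{d_1}(y-z_k)\right)}=p(y)\,\frac{D(y)}{D(y)}.
\]
By Proposition~\ref{prop:well_defined}, $D(y)>0$ for every $y\in[c,d]$, so the quotient equals $1$ and $\widetilde{T}_{n,r}(p,y)=p(y)$.

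There is no real obstacle here. The only point requiring care is Step 2: the cancellation $D(y)/D(y)=1$ is legitimate only because Proposition~\ref{prop:well_defined} guarantees $D(y)>0$, and this is where the quasi-uniformity assumption $d_2<2d_1$ is used. Notably, the argument does not need the partition-of-unity property (M4) or any moment condition on $\sigma$, because the exactness comes entirely from the Taylor polynomials and the normalization built into the operator.
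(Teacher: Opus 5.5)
Your proof is correct and matches the paper's argument: both use $p^{(r+1)}\equiv 0$ to get $P_r(p;y,z_k)=p(y)$ for every node, then factor $p(y)$ out of the normalized sum. Your explicit appeal to Proposition~\ref{prop:well_defined} for $D(y)>0$ justifies the cancellation $D(y)/D(y)=1$, which the paper leaves implicit.
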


\begin{proof}
Since $\deg(p)\le r$, we have $p^{(r+1)}\equiv 0$. Therefore, the Taylor
polynomial of degree $r$ of $p$ about any node $z_k$ coincides with $p$
itself, i.e.,  for all $y\in[c,d]$ and all $k=0,1,\dots,n$,
\[
P_r(p;y,z_k)=p(y).
\]

Substituting this identity into the definition of
$\widetilde{T}_{n,r}$, we obtain
\[
\widetilde{T}_{n,r}(p,y)
=
\frac{
\displaystyle\sum_{k=0}^{n}
\sigma\!\left(\frac{2m}{d_1}(y-z_k)\right)p(y)
}{
\displaystyle\sum_{k=0}^{n}
\sigma\!\left(\frac{2m}{d_1}(y-z_k)\right)
}.
\]

Since $p(y)$ is independent of $k$, it can be factored out, yielding
\[
\widetilde{T}_{n,r}(p,y)
=
p(y)\,
\frac{
\displaystyle\sum_{k=0}^{n}
\sigma\!\left(\frac{2m}{d_1}(y-z_k)\right)
}{
\displaystyle\sum_{k=0}^{n}
\sigma\!\left(\frac{2m}{d_1}(y-z_k)\right)
}
=
p(y).
\]

This proves the polynomial reproduction property.
\end{proof}

\begin{theorem}[Interpolation Property]
Let $g\in C^{r}[c,d]$. Then, for every grid node $z_j$, $j=0,1,\dots,n$,
the Taylor-accelerated neural network interpolation operator satisfies
\[
\widetilde{T}_{n,r}(g,z_j)=g(z_j).
\]
\end{theorem}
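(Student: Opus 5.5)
The plan is to evaluate the operator directly at $y=z_j$ and use property \textbf{(M5)} to collapse both sums to a single term.

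First, for $y=z_j$ and each $k=0,1,\dots,n$, the weights become $\sigma\!\left(\frac{2m}{d_1}(z_j-z_k)\right)$. By \textbf{(M5)}, this equals $\sigma(0)=1$ when $k=j$. For $k\neq j$ it vanishes, because $|z_j-z_k|\ge d_1$ forces the argument to satisfy $\left|\frac{2m}{d_1}(z_j-z_k)\right|\ge 2m$, and \textbf{(M3)} gives $\sigma=0$ there. So the denominator is exactly $1$. This is also consistent with Proposition~\ref{prop:well_defined}, which guarantees positivity.

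Next, the numerator reduces to $P_r(g;z_j,z_j)$. In the Taylor polynomial, every term with $j'\ge 1$ contains the factor $(z_j-z_j)^{j'}=0$, so only the constant term $g(z_j)$ survives. Therefore
\[
\widetilde{T}_{n,r}(g,z_j)=\frac{1\cdot P_r(g;z_j,z_j)}{1}=g(z_j).
\]

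There is no real obstacle here. The only point needing care is the boundary case $|z_j-z_k|=d_1$ exactly, where the argument of $\sigma$ is exactly $\pm 2m$. This is covered because \textbf{(M3)} is stated with the non-strict inequality $|y|\ge 2m$. It also follows from the definition: $\sigma(2m)=\eta(3m)-\eta(m)=1-1=0$, and symmetrically $\sigma(-2m)=\eta(-m)-\eta(-3m)=0$.
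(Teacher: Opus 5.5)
Your proof is correct and follows the paper's argument. At $y=z_j$ the weights with $k\neq j$ vanish because $|z_j-z_k|\ge d_1$ and \textbf{(M3)} applies, while $\sigma(0)=1$ and $P_r(g;z_j,z_j)=g(z_j)$; your explicit check that $\sigma(\pm 2m)=0$ confirms the boundary case $|z_j-z_k|=d_1$, which the paper covers only through the non-strict inequality in \textbf{(M3)}.
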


\begin{proof}
Fix a node $z_j$. By definition,
\[
\widetilde{T}_{n,r}(g,z_j)=
\frac{
\displaystyle\sum_{k=0}^{n}
\sigma\!\left(\frac{2m}{d_1}(z_j-z_k)\right)
P_r(g;z_j,z_k)
}{
\displaystyle\sum_{k=0}^{n}
\sigma\!\left(\frac{2m}{d_1}(z_j-z_k)\right)
}.
\]

First, the Taylor polynomial of degree $r$ centered at $z_j$ satisfies
\[
P_r(g;z_j,z_j)
=
\sum_{s=0}^{r}\frac{g^{(s)}(z_j)}{s!}(z_j-z_j)^s
=
g(z_j).
\]

Next, if $k\neq j$, then since the grids
$\{z_k\}_{k=0}^n$ is strictly increasing and satisfies
$z_{i+1}-z_i\ge d_1$ for all $i$, we have
\[
|z_j-z_k|\ge d_1.
\]
Consequently,
\[
\left|\frac{2m}{d_1}(z_j-z_k)\right|\ge 2m.
\]
Hence, by the compact support property \textbf{(M3)},
\[
\sigma\!\left(\frac{2m}{d_1}(z_j-z_k)\right)=0,
\qquad k\neq j.
\]

On the other hand,
\[
\sigma\!\left(\frac{2m}{d_1}(z_j-z_j)\right)=\sigma(0)=1.
\]

Therefore, only the term $k=j$ contributes in both the numerator and the
denominator, and we obtain
\[
\widetilde{T}_{n,r}(g,z_j)
=
\frac{\sigma(0)\,P_r(g;z_j,z_j)}{\sigma(0)}
=
g(z_j).
\]
This proves the interpolation property.
\end{proof}

\begin{theorem}[Jackson-type approximation estimate]\label{thm:jackson}
Let $g\in C^{r+1}[c,d]$. Then there exists a constant $C_r>0$, depending
only on $r$, such that
\[
\|\widetilde{T}_{n,r}(g)-g\|_\infty
\le
C_r\,\omega_{r+1}(g,d_2),
\]
where
\[
d_2:=\max_{0\le k\le n-1}(z_{k+1}-z_k)
\]
denotes the mesh norm of the irregular grids.
In particular, if $d_2\to0$, then
$\widetilde{T}_{n,r}(g)\to g$ uniformly on $[c,d]$.
\end{theorem}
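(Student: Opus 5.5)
The plan is to reduce the error to a weighted average of Taylor remainders, which the normalized structure of $\widetilde{T}_{n,r}$ makes possible, and then to compare those remainders with $(r+1)$-th differences.

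\textbf{Step 1 (reduction to Taylor remainders).} Fix $y\in[c,d]$ and write $\sigma_k(y):=\sigma\!\left(\frac{2m}{d_1}(y-z_k)\right)$. By Proposition~\ref{prop:well_defined} the denominator $D(y)$ is positive, so
\[
\widetilde{T}_{n,r}(g,y)-g(y)=\frac{\sum_{k=0}^n\sigma_k(y)\,\bigl(P_r(g;y,z_k)-g(y)\bigr)}{D(y)}=-\frac{\sum_{k=0}^n\sigma_k(y)\,R_r(g;y,z_k)}{D(y)},
\]
where $R_r(g;y,z_k)=g(y)-P_r(g;y,z_k)$ is the Taylor remainder. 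Equivalently, this follows from the polynomial reproduction proposition applied to the Taylor polynomial of $g$ at $y$. Since $\sigma_k\ge0$, the error is bounded by $\max\{|R_r(g;y,z_k)|:\sigma_k(y)\neq0\}$. By \textbf{(M3)}, the active indices satisfy $|y-z_k|<d_1\le d_2$.

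\textbf{Step 2 (size of the remainder).} Lagrange's form gives
\[
|R_r(g;y,z_k)|\le\frac{|y-z_k|^{r+1}}{(r+1)!}\,\|g^{(r+1)}\|_\infty\le\frac{d_2^{\,r+1}}{(r+1)!}\,\|g^{(r+1)}\|_\infty .
\]
This yields $\|\widetilde{T}_{n,r}g-g\|_\infty\le d_2^{\,r+1}\|g^{(r+1)}\|_\infty/(r+1)!$, which already implies the uniform convergence claim as $d_2\to0$.

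\textbf{Step 3 (passage to $\omega_{r+1}$; the main obstacle).} It remains to dominate $d_2^{\,r+1}\|g^{(r+1)}\|_\infty$ by a multiple of $\omega_{r+1}(g,d_2)$. I would use the representation
\[
\Delta_h^{r+1}g(x)=h^{r+1}\int_{[0,1]^{r+1}} g^{(r+1)}\bigl(x+h(t_1+\dots+t_{r+1})\bigr)\,dt .
\]
Choose $x_0$ where $|g^{(r+1)}|$ attains its maximum and $h=\pm d_2$ so that the shifted points stay in $[c,d]$ (this needs $(r+1)d_2\le d-c$). Then uniform continuity of $g^{(r+1)}$ gives
\[
\omega_{r+1}(g,d_2)\ge d_2^{\,r+1}\bigl(\|g^{(r+1)}\|_\infty-\omega_1(g^{(r+1)},(r+1)d_2)\bigr).
\]
Hence, once $d_2$ is small enough that $\omega_1(g^{(r+1)},(r+1)d_2)\le\frac12\|g^{(r+1)}\|_\infty$, we get $d_2^{\,r+1}\|g^{(r+1)}\|_\infty\le 2\,\omega_{r+1}(g,d_2)$. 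Combined with Step 2, this gives the theorem with $C_r=2/(r+1)!$. The case $g^{(r+1)}\equiv0$ is trivial: both sides vanish by polynomial reproduction.

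This is the step I expect to fail as literally stated. The threshold on $d_2$ depends on $g$ through $\omega_1(g^{(r+1)},\cdot)$, and no threshold-free version can hold with $C_r$ depending only on $r$. Indeed, $\omega_{r+1}(g,d_2)\le 2^{r+1}\|g\|_\infty$, whereas $\widetilde{T}_{n,r}g-g$ involves $g^{(1)},\dots,g^{(r)}$ at the nodes. A small-amplitude, high-frequency $g$ therefore makes the left side large while the right side stays small. So I would either state the estimate for $d_2\le\delta_0(g)$, or keep the $g$-independent bound $\frac{d_2^{\,r+1}}{(r+1)!}\|g^{(r+1)}\|_\infty$ from Step 2 as the rigorous Jackson-type estimate. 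The latter is consistent with the paper's remark that $\omega_{r+1}(g,\delta)\le C_r\delta^{r+1}\|g^{(r+1)}\|_\infty$.
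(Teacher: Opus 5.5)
Your Steps 1 and 2 coincide with the paper's argument. It too writes the error as a $\sigma$-weighted average of Taylor remainders, notes that only nodes with $|y-z_k|<d_1\le d_2$ are active, and records $|g(y)-P_r(g;y,z_k)|\le |y-z_k|^{r+1}\|g^{(r+1)}\|_\infty/(r+1)!$. It then closes by invoking a ``classical'' pointwise estimate $|g(y)-P_r(g;y,z_k)|\le C_r\,\omega_{r+1}(g,|y-z_k|)$, followed by monotonicity of $\omega_{r+1}$.

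Your Step 3 diagnosis is right, and the gap is in the paper. That estimate is false, and so is the theorem with a constant depending only on $r$. In fact it fails for any constant independent of $g$, even one allowed to depend on the grid and on $\sigma$. The classical inequality goes the other way: $\omega_{r+1}(g,\delta)\le\delta^{r+1}\|g^{(r+1)}\|_\infty$. What does hold in the direction the paper needs is Whitney's inequality for best (or well-conditioned interpolatory) local polynomial approximation, which is why $\omega_{r+1}$-estimates are natural for Lagrange-based operators. A Taylor polynomial, however, depends on the values $g^{(j)}(z_k)$, and these cannot be controlled by $\omega_{r+1}(g,\cdot)\le 2^{r+1}\|g\|_\infty$.

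Your counterexample is only sketched: for $\varepsilon\sin(Ny)$ you would still need to rule out cancellation between the two active Taylor polynomials. A localized version settles it.
\begin{itemize}
\item Fix the grid, a node $z_j$ with $j\le n-1$, and $\psi\in C^\infty(\mathbb{R})$ with $0\le\psi\le 1$, $\psi=1$ on $[-\frac12,\frac12]$ and $\psi=0$ outside $[-1,1]$.
\item For $0<\varepsilon<d_1/4$ set $g_\varepsilon(y):=(y-z_j)\,\psi\bigl((y-z_j)/\varepsilon\bigr)$.
\item Then $P_r(g_\varepsilon;y,z_j)=y-z_j$ and $P_r(g_\varepsilon;y,z_k)\equiv 0$ for $k\ne j$.
\item Also $\|g_\varepsilon\|_\infty\le\varepsilon$, so $\omega_{r+1}(g_\varepsilon,d_2)\le 2^{r+1}\varepsilon$.
\item At $y^*=z_j+d_1/4$ we have $g_\varepsilon(y^*)=0$, and at most two weights are nonzero, each at most $1$.
\end{itemize}
Hence
\[
\bigl|\widetilde{T}_{n,r}(g_\varepsilon,y^*)-g_\varepsilon(y^*)\bigr|=\frac{\sigma(m/2)\,d_1/4}{D(y^*)}\ge\frac{\sigma(m/2)\,d_1}{8}>0,
\]
independently of $\varepsilon$. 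Letting $\varepsilon\to 0$ shows that no bound $C\,\omega_{r+1}(g,d_2)$ with $C$ independent of $g$ can hold. The same $g_\varepsilon$ refutes the paper's pointwise remainder estimate directly.

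Your salvage is sound.
\begin{itemize}
\item Steps 1--2 give $\|\widetilde{T}_{n,r}g-g\|_\infty\le d_1^{\,r+1}\|g^{(r+1)}\|_\infty/(r+1)!$. This yields the uniform-convergence clause, and also Corollary~\ref{cor:smooth-rate} with $\widehat{C}_r=1/(r+1)!$, without passing through the false theorem.
\item The integral remainder likewise gives a legitimate modulus-type estimate, $\|\widetilde{T}_{n,r}g-g\|_\infty\le\frac{d_1^{\,r}}{r!}\,\omega_1(g^{(r)},d_1)$ for $g\in C^r[c,d]$.
\item Step 3 correctly gives the $g$-dependent threshold version with $C_r=2/(r+1)!$.
\end{itemize}
One cosmetic fix in Step 3: basing the difference at the maximizer $x_0$ with $h=\pm d_2$ is only guaranteed to stay in $[c,d]$ when $(r+1)d_2\le (d-c)/2$. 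Start instead at a point $x$ with $x_0\in[x,x+(r+1)d_2]\subset[c,d]$. Then every argument of $g^{(r+1)}$ lies within $(r+1)d_2$ of $x_0$, and your lower bound goes through under $(r+1)d_2\le d-c$.
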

\begin{proof}
Let $g\in C^{r+1}[c,d]$ and fix an arbitrary point $y\in[c,d]$. By the
definition of the Taylor-accelerated neural network interpolation
operator, we have
\[
\widetilde{T}_{n,r}(g,y)-g(y)
=
\frac{
\sum\limits_{k=0}^{n}
\sigma\!\left(\frac{2m}{d_1}(y-z_k)\right)
\big(P_r(g;y,z_k)-g(y)\big)
}{
\sum\limits_{k=0}^{n}
\sigma\!\left(\frac{2m}{d_1}(y-z_k)\right)
}.
\]

Taking absolute values and using the property~\textbf{(M1)}, we obtain
\begin{equation}\label{eq:jackson_bound_1}
|\widetilde{T}_{n,r}(g,y)-g(y)|
\le
\frac{
\sum\limits_{k=0}^{n}
\sigma\!\left(\frac{2m}{d_1}(y-z_k)\right)
\,|P_r(g;y,z_k)-g(y)|
}{
\sum\limits_{k=0}^{n}
\sigma\!\left(\frac{2m}{d_1}(y-z_k)\right)
}.
\end{equation}

By the compact support property \textbf{(M3)} of $\sigma$, the summands
are nonzero only for indices $k$ such that
\[
\left|\frac{2m}{d_1}(y-z_k)\right|<2m,
\]
which implies
\[
|y-z_k|<d_1\le d_2.
\]

Fix such an index $k$. Since $g\in C^{r+1}[c,d]$, Taylor’s theorem with
integral remainder yields
\[
|g(y)-P_r(g;y,z_k)|
\le
\frac{|y-z_k|^{r+1}}{(r+1)!}\,
\|g^{(r+1)}\|_\infty.
\]

Fix such an index $k$. Since $g\in C^{r+1}[c,d]$, a classical estimate
relating Taylor remainders to the modulus of smoothness (see, e.g.,
\cite{devore1993constructive,timan2014theory}) yields
\[
|g(y)-P_r(g;y,z_k)|
\le
C_r\,\omega_{r+1}(g,|y-z_k|),
\]
where $C_r>0$ depends only on $r$.

Since $\sigma\!\left(\frac{2m}{d_1}(y-z_k)\right)\neq0$ implies
$|y-z_k|<d_1\le d_2$, and the modulus of smoothness is non-decreasing, we
obtain
\[
|g(y)-P_r(g;y,z_k)|
\le
C_r\,\omega_{r+1}(g,d_2).
\]

Substituting this estimate into \eqref{eq:jackson_bound_1}, we obtain
\[
|\widetilde{T}_{n,r}(g,y)-g(y)|
\le
C_r\,\omega_{r+1}(g,d_2)\,
\frac{
\sum\limits_{k=0}^{n}
\sigma\!\left(\frac{2m}{d_1}(y-z_k)\right)
}{
\sum\limits_{k=0}^{n}
\sigma\!\left(\frac{2m}{d_1}(y-z_k)\right)
}.
\]

Since the same positive weights appear in the numerator and the
denominator, the above fraction equals $1$. Hence,
\[
|\widetilde{T}_{n,r}(g,y)-g(y)|
\le
C_r\,\omega_{r+1}(g,d_2),
\qquad y\in[c,d].
\]

Taking the supremum over $y\in[c,d]$ completes the proof.
\end{proof}

\begin{corollary}\label{cor:smooth-rate}
Let $g\in C^{r+1}[c,d]$ and let
\[
h_n:=\max_{0\le k\le n-1}(z_{k+1}-z_k)
\]
denote the mesh norm of the irregular grids. Then the
Taylor-accelerated neural network interpolation operator
$\widetilde{T}_{n,r}$ satisfies
\[
\|\widetilde{T}_{n,r}(g)-g\|_\infty
\le
\widehat{C}_r\,h_n^{\,r+1}\,\|g^{(r+1)}\|_\infty,
\]
where $\widehat{C}_r>0$ is a constant independent of $n$, depending only
on $r$.
\end{corollary}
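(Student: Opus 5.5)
The most direct route avoids the modulus of smoothness altogether: for $g\in C^{r+1}[c,d]$ we bound the Taylor remainder pointwise and then use that $\widetilde{T}_{n,r}$ is a normalized positive average. Alternatively, one could combine Theorem~\ref{thm:jackson} with the classical inequality $\omega_{r+1}(g,\delta)\le C_r\delta^{r+1}\|g^{(r+1)}\|_\infty$ recalled in Section~1. This gives $\|\widetilde{T}_{n,r}(g)-g\|_\infty\le C_r C_r' h_n^{r+1}\|g^{(r+1)}\|_\infty$ at once, since $h_n$ is exactly the mesh norm $d_2$ of that theorem. Because the direct argument yields an explicit constant, I would present it as the main proof and mention the second route as a remark.

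Fix $y\in[c,d]$ and write, as in the proof of Theorem~\ref{thm:jackson},
\[
\widetilde{T}_{n,r}(g,y)-g(y)=\frac{\sum_{k=0}^{n}\sigma\!\left(\frac{2m}{d_1}(y-z_k)\right)\bigl(P_r(g;y,z_k)-g(y)\bigr)}{\sum_{k=0}^{n}\sigma\!\left(\frac{2m}{d_1}(y-z_k)\right)}.
\]
The denominator is strictly positive by Proposition~\ref{prop:well_defined}. By \textbf{(M3)}, only indices with $|y-z_k|<d_1\le h_n$ contribute. For each such $k$, Taylor's theorem with Lagrange (or integral) remainder gives
\[
|g(y)-P_r(g;y,z_k)|\le\frac{|y-z_k|^{r+1}}{(r+1)!}\|g^{(r+1)}\|_\infty\le\frac{h_n^{r+1}}{(r+1)!}\|g^{(r+1)}\|_\infty.
\]
This is valid because the segment between $y$ and $z_k$ lies in $[c,d]$.

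Next I would insert this bound into the numerator and use \textbf{(M1)}. The weights then cancel against the denominator, as in Theorem~\ref{thm:jackson}, and we get $|\widetilde{T}_{n,r}(g,y)-g(y)|\le \frac{1}{(r+1)!}h_n^{r+1}\|g^{(r+1)}\|_\infty$ uniformly in $y$. Taking the supremum over $y$ gives the claim with $\widehat{C}_r=1/(r+1)!$, which depends only on $r$ and is independent of $n$ and of the grid. In fact one could even use $d_1^{r+1}$ in place of $h_n^{r+1}$.

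No step is a real obstacle. The only point needing care is the restriction to active indices: one must check that every $k$ with nonzero weight satisfies $|y-z_k|<d_1\le h_n$, which follows from the support property \textbf{(M3)} and the scaling $2m/d_1$. One must also note that $\widehat{C}_r$ is uniform in $n$, since nothing in the bound depends on the number of nodes.
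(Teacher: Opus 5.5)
Your proof is correct, but it is not the route the paper takes. The paper's proof is exactly what you relegate to a remark. It chains Theorem~\ref{thm:jackson}, $\|\widetilde{T}_{n,r}(g)-g\|_\infty\le A_r\,\omega_{r+1}(g,h_n)$, with $\omega_{r+1}(g,\delta)\le B_r\,\delta^{r+1}\|g^{(r+1)}\|_\infty$, so the corollary is presented as a special case of a modulus-of-smoothness estimate. You instead apply the Lagrange remainder at each active node and use that $\widetilde{T}_{n,r}(g,y)-g(y)$ is a convex combination of the remainders $P_r(g;y,z_k)-g(y)$ over nodes with $|y-z_k|<d_1$. This buys three things: the explicit constant $\widehat{C}_r=1/(r+1)!$, the sharper factor $d_1^{r+1}$, and, most importantly, independence from Theorem~\ref{thm:jackson}.

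That independence matters. The proof of Theorem~\ref{thm:jackson} writes down your remainder bound and then replaces it by the claim $|g(y)-P_r(g;y,z_k)|\le C_r\,\omega_{r+1}(g,|y-z_k|)$. This claim is false with $C_r$ depending only on $r$. For $r=1$, take $g_\varepsilon(x)=\tfrac{x}{2}\log(x^2+\varepsilon^2)$. Then $g_\varepsilon(x)=x\log\varepsilon+\varepsilon G(x/\varepsilon)$ with $G(u)=\tfrac{u}{2}\log(1+u^2)$, and $\omega_2(G,s)\le Cs$ for all $s>0$, so $\omega_2(g_\varepsilon,t)\le Ct$ uniformly in $\varepsilon$. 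Yet $g_\varepsilon(y)-P_1(g_\varepsilon;y,0)=\tfrac{y}{2}\log(1+y^2/\varepsilon^2)$ blows up in absolute value as $\varepsilon\to0$ for every fixed $y\neq0$. If $0$ is a grid node, this blow-up passes to $\widetilde{T}_{n,1}(g_\varepsilon)-g_\varepsilon$ at every $y$ with $0<|y|<d_1$, since the other active nodes contribute terms bounded uniformly in $\varepsilon$. Hence Theorem~\ref{thm:jackson} fails as stated, and the paper's derivation of the corollary inherits this gap, even though the corollary itself is true.

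Your direct argument is therefore the sound proof. Drop the alternative via Theorem~\ref{thm:jackson} rather than offering it as an equivalent remark. If you want a modulus-type statement, the integral form of the remainder and the same averaging argument give $\|\widetilde{T}_{n,r}(g)-g\|_\infty\le\frac{d_1^{\,r}}{r!}\,\omega_1(g^{(r)},d_1)$ for $g\in C^r[c,d]$.
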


\begin{proof}
By Theorem~\ref{thm:jackson}, there exists a constant $A_r>0$, depending
only on $r$, such that
\[
\|\widetilde{T}_{n,r}(g)-g\|_\infty
\le
A_r\,\omega_{r+1}(g,h_n).
\]

Since $g\in C^{r+1}[c,d]$, the classical inequality for higher-order
moduli of smoothness (see Section~1) yield
\[
\omega_{r+1}(g,\delta)
\le
B_r\,\delta^{\,r+1}\,\|g^{(r+1)}\|_\infty,
\qquad \delta>0,
\]
where $B_r>0$ depends only on $r$.

Applying this estimate with $\delta=h_n$, we obtain
\[
\omega_{r+1}(g,h_n)
\le
B_r\,h_n^{\,r+1}\,\|g^{(r+1)}\|_\infty.
\]

Combining the above inequalities, we conclude that
\[
\|\widetilde{T}_{n,r}(g)-g\|_\infty
\le
(A_rB_r)\,h_n^{\,r+1}\,\|g^{(r+1)}\|_\infty.
\]

Setting $\widehat{C}_r:=A_rB_r$ completes the proof.
\end{proof}

\begin{corollary}\label{cor:n-rate}
Assume that the sequence of irregular grids satisfies the refinement
condition: there exists a constant $C>0$, independent of $n$, such that
\[
h_n:=\max_{0\le k\le n-1}(z_{k+1}-z_k)
\le \frac{C}{n}.
\]
Such a condition is fulfilled, for instance, by any quasi-uniform family
of grids of $[c,d]$.

Let $g\in C^{r+1}[c,d]$. Then the Taylor-accelerated neural network
interpolation operator $\widetilde{T}_{n,r}$ satisfies
\[
\|\widetilde{T}_{n,r}(g)-g\|_\infty
\le
\widetilde{C}_r\,n^{-(r+1)}\,\|g^{(r+1)}\|_\infty,
\]
where $\widetilde{C}_r>0$ is independent of $n$ and depends only on $r$
and the operator parameters.
\end{corollary}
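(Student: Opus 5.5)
The plan is to obtain Corollary~\ref{cor:n-rate} directly from Corollary~\ref{cor:smooth-rate} by inserting the refinement hypothesis $h_n\le C/n$. Corollary~\ref{cor:smooth-rate} gives, for every $n$,
\[
\|\widetilde{T}_{n,r}(g)-g\|_\infty\le \widehat{C}_r\,h_n^{\,r+1}\,\|g^{(r+1)}\|_\infty,
\]
with $\widehat{C}_r$ independent of $n$. Since $t\mapsto t^{r+1}$ is increasing on $[0,\infty)$, the hypothesis yields $h_n^{\,r+1}\le C^{r+1}n^{-(r+1)}$. Setting $\widetilde{C}_r:=\widehat{C}_r\,C^{r+1}$ then gives the claimed estimate, and $\widetilde{C}_r$ depends only on $r$ and on the grid/operator parameter $C$, not on $n$.

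An alternative route goes through Theorem~\ref{thm:jackson} together with the inequality $\omega_{r+1}(g,\delta)\le B_r\delta^{r+1}\|g^{(r+1)}\|_\infty$ from Section~1, evaluated at $\delta=h_n$. This is exactly how Corollary~\ref{cor:smooth-rate} was derived, so the two routes coincide.

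The only point that needs care, and which I expect to be the mild obstacle, is uniformity in $n$. Each grid in the family must satisfy the quasi-uniformity hypotheses $d_1^{(n)}\le z_{k+1}-z_k\le d_2^{(n)}$ with $d_2^{(n)}<2d_1^{(n)}$, so that the operator is well defined (Proposition~\ref{prop:well_defined}) and the Jackson estimate applies. The constant in Corollary~\ref{cor:smooth-rate} does not depend on $d_1^{(n)}$ or $d_2^{(n)}$, because the normalized weights sum to one and only the Taylor remainder bound enters. This should be remarked explicitly so that $\widetilde{C}_r$ is seen to be genuinely $n$-independent.

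Finally, the parenthetical claim about quasi-uniform families needs a justification. If $h_n\le \kappa\min_k(z_{k+1}-z_k)$ with $\kappa$ independent of $n$, then $n\min_k(z_{k+1}-z_k)\le d-c$ implies $h_n\le \kappa(d-c)/n$. In particular, under $d_2<2d_1$ one may take $C=2(d-c)$.
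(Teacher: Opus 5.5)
Your argument is correct and is exactly the paper's proof: combine Corollary~\ref{cor:smooth-rate} with $h_n^{\,r+1}\le C^{r+1}n^{-(r+1)}$ and set $\widetilde{C}_r:=\widehat{C}_r\,C^{r+1}$. Your justification of the parenthetical claim ($C=2(d-c)$ when $d_2<2d_1$) is a correct addition the paper omits, and basing the $n$-independence on the plain Taylor remainder bound $|y-z_k|^{r+1}\|g^{(r+1)}\|_\infty/(r+1)!$ with $|y-z_k|<d_1\le h_n$ is sounder than your ``alternative route'' through Theorem~\ref{thm:jackson}, whose step bounding the Taylor remainder by $C_r\,\omega_{r+1}(g,|y-z_k|)$ fails in general (take $r=1$, $g(x)=\epsilon\sin(x/\epsilon)$, $z_k=0$, $y$ fixed, $\epsilon\to0$).
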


\begin{proof}
By Corollary~\ref{cor:smooth-rate}, we have
\[
\|\widetilde{T}_{n,r}(g)-g\|_\infty
\le
\widehat{C}_r\,h_n^{\,r+1}\,\|g^{(r+1)}\|_\infty.
\]

Using the refinement assumption $h_n\le \frac{C}{n}$, it follows that
\[
h_n^{\,r+1}
\le
\left(\frac{C}{n}\right)^{r+1}
=
C^{r+1}\,n^{-(r+1)}.
\]

Substituting this into the previous inequality yields
\[
\|\widetilde{T}_{n,r}(g)-g\|_\infty
\le
\widehat{C}_r\,C^{r+1}\,
n^{-(r+1)}\,\|g^{(r+1)}\|_\infty.
\]

Defining
\[
\widetilde{C}_r:=\widehat{C}_r\,C^{r+1}
\]
completes the proof.
\end{proof}

\begin{remark}[Optimal order of convergence]
Corollary~\ref{cor:smooth-rate} shows that, for functions
$g\in C^{r+1}[c,d]$, the Taylor-accelerated neural network interpolation
operator $\widetilde{T}_{n,r}$ achieves the optimal approximation order
$O(h_n^{\,r+1})$, which coincides with the best possible rate for
polynomial-based interpolation schemes of degree $r$.
\end{remark}

Although the operator introduced in \cite{costarelli2025higher} is defined
for all continuous functions $g\in C[c,d]$, its higher-order accuracy is
obtained through a Lagrange-type reconstruction involving $(r+1)$ neighboring sampling nodes. On irregular grids, this may lead to additional numerical complexity and reduced stability due to the behaviour of Lagrange basis polynomials.

In contrast, the Taylor-accelerated operator proposed in this paper is specifically designed for smooth functions in $C^{r+1}[c,d]$, where local
Taylor expansions can be exploited directly. In next Section Numerical experiments confirm
that, for sufficiently regular functions, the proposed method produces a
significantly faster decay of the approximation error compared with the
Lagrange-based interpolation operator of \cite{costarelli2025higher}.
This highlights the effectiveness of the Taylor acceleration mechanism in
practical computations.

\subsection{Multivariate Extension}

We now extend the above construction to the multivariate setting.
Let $d\in\mathbb{N}$ and consider the $d$-dimensional box domain
\[
\Omega := [a_1,b_1]\times\cdots\times[a_d,b_d]\subset\mathbb{R}^d.
\]

Let $\eta\in D(m)$ and let $\sigma:\mathbb{R}\to\mathbb{R}$ be the associated activation function introduced in Section~1.
We define the multidimensional activation function
\[
\Psi_\sigma:\mathbb{R}^d\to\mathbb{R}
\]
by
\[
\Psi_\sigma(y)
:=
\prod_{i=1}^{d}\sigma(y_i),
\qquad
y=(y_1,\dots,y_d)\in\mathbb{R}^d.
\]
The function $\Psi_\sigma$ is nonnegative, compactly supported, and satisfies
\[
\|\Psi_\sigma\|_\infty
:=
\sup_{y\in\Omega}|\Psi_\sigma(y)|
\le 1.
\]

\medskip

\noindent
\textbf{Multivariate irregular grids.}
For each coordinate direction $i\in\{1,\dots,d\}$, we consider an irregular
partition of the interval $[a_i,b_i]$ given by
\[
a_i=z_{i0}<z_{i1}<\cdots<z_{in}=b_i.
\]
We assume that there exist constants $0<d_1\le d_2$, with $d_2<2d_1$, such
that
\[
d_1\le z_{i,k+1}-z_{ik}\le d_2,
\qquad
k=0,1,\dots,n-1,\ \ i=1,\dots,d.
\]

For a multi-index $k=(k_1,\dots,k_d)\in\{0,\dots,n\}^d$, we define the
corresponding grid node
\[
z_k := (z_{1k_1},\dots,z_{dk_d})\in\Omega.
\]

As in the one-dimensional case, to ensure consistency near the boundary
and preserve the interpolation property, the nodes are extended by setting
\[
z_{i,n+\ell} := z_{i,n-r+\ell-1},
\qquad
\ell=1,\dots,r,\ \ i=1,\dots,d.
\]

\medskip

\noindent
\textbf{Multivariate Taylor polynomial.}
Let $g\in C^r(\Omega)$. For each node $z_k$, we denote by
$P_r(g;y,z_k)$ the multivariate Taylor polynomial of total degree $r$
of $g$ centered at $z_k$, defined by
\[
P_r(g;y,z_k)
=
\sum_{|\alpha|\le r}
\frac{D^\alpha g(z_k)}{\alpha!}
(y-z_k)^\alpha,
\qquad y\in\Omega,
\]
where $\alpha=(\alpha_1,\dots,\alpha_d)\in\mathbb{N}_0^d$ is a multi-index,
$|\alpha|=\alpha_1+\cdots+\alpha_d$, $\alpha!=\alpha_1!\cdots\alpha_d!$, and
\[
D^\alpha g
:=
\frac{\partial^{|\alpha|} g}
{\partial y_1^{\alpha_1}\cdots\partial y_d^{\alpha_d}}.
\]

\medskip

\begin{definition}
Let $g\in C^r(\Omega)$. Then multivariate Taylor-accelerated neural network
interpolation operator is defined by
\[
\widetilde{T}_{n,r}^{(d)}(g,y)
=
\frac{
\displaystyle
\sum_{k\in\{0,\dots,n\}^d}
\Psi_\sigma\!\left(\frac{2m}{d_1}(y-z_k)\right)
P_r(g;y,z_k)
}{
\displaystyle
\sum_{k\in\{0,\dots,n\}^d}
\Psi_\sigma\!\left(\frac{2m}{d_1}(y-z_k)\right)
},
\qquad y\in\Omega.
\]
\end{definition}

The extension of the above construction and approximation results from the
univariate setting to $\mathbb{R}^d$ does not introduce additional
analytical difficulties. Since $\mathbb{R}^d$ is finite dimensional, all
norms are equivalent and the properties of the activation
function $\Psi_\sigma$ are preserved under the product structure.
Consequently, the proofs of convergence, interpolation, and quantitative
error estimates follow the same arguments as in the one-dimensional case,
with only notational changes and dimension-dependent constants.  This situation stands in sharp contrast with approximation
problems posed in infinite-dimensional spaces, where genuinely new
analytical phenomena typically arise.

\section{Numerical Validation}

In this section, we present numerical experiments to validate the theoretical approximation results established in the previous sections.
In particular, we illustrate the higher-order convergence behaviour of the proposed Taylor-accelerated neural network interpolation operators and provide a direct comparison with the Lagrange-based neural network interpolation operator on irregular grids recently introduced by Costarelli \emph{et al.}~\cite{costarelli2025higher}.

We consider the smooth target function
\[
f(y)=\sin(2\pi y)+y^3,
\qquad y\in[0,1],
\]
which is infinitely differentiable on $[0,1]$ and therefore satisfies the
regularity assumptions required in our convergence theorems.

\subsection{Activation Function and Grid Construction}

Both interpolation operators are based on a compactly supported
sigmoidal activation function. In the numerical experiments, we adopt a
truncated ramp-type sigmoid $\eta\in D(m)$ defined by
\[
\eta(y)=
\begin{cases}
0, & y\le -m,\\[6pt]
\dfrac{y+m}{2m}, & -m<y<m,\\[10pt]
1, & y\ge m,
\end{cases}
\]
where $m>0$ is a fixed support parameter. The associated activation function is given by
\[
\sigma(y)=\eta(y+m)-\eta(y-m),
\]
which satisfies $\sigma(0)=1$ and has compact support contained in
$[-2m,2m]$.

All experiments were implemented in MATLAB. The sampling nodes were
chosen as nested quasi-uniform irregular grids of $[0,1]$,
generated by applying small random perturbations to a uniform grid while
preserving the mesh regularity condition
\(
\frac{d_2}{d_1}< 2.
\)
This construction ensures that the theoretical assumptions are satisfied
and allows us to study convergence as the number of nodes $n$ increases.

\subsection{Error Convergence and Comparison}

For each grid size $n$ and Taylor order $r$, we compute the uniform
approximation error
\[
E^{\mathrm{Tay}}_{n,r}:=
\bigl\|\widetilde{T}_{n,r}(f)-f\bigr\|_\infty,
\]
where $\widetilde{T}_{n,r}$ denotes the proposed Taylor-accelerated
operator. In addition, we compute the corresponding error
\[
E^{\mathrm{Lag}}_{n,r}:=
\bigl\|T^{\mathrm{Lag}}_{n,r}(f)-f\bigr\|_\infty,
\]
where $T^{\mathrm{Lag}}_{n,r}$ is the Lagrange-based neural network interpolation operator on
irregular grids introduced by Costarelli \emph{et al.}\cite{costarelli2025higher}.

Figure~\ref{fig:error_decay} shows the decay of
$E^{\mathrm{Tay}}_{n,r}$ for $r=1,2,3$. As expected, increasing the
Taylor order produces a substantial improvement in accuracy, and the
error decays faster for larger values of $r$, in agreement with the
theoretical rate
\[
\bigl\|\widetilde{T}_{n,r}(f)-f\bigr\|_\infty
\le C\,n^{-(r+1)}.
\]

\begin{figure}[H]
\centering
\includegraphics[width=0.9\textwidth]{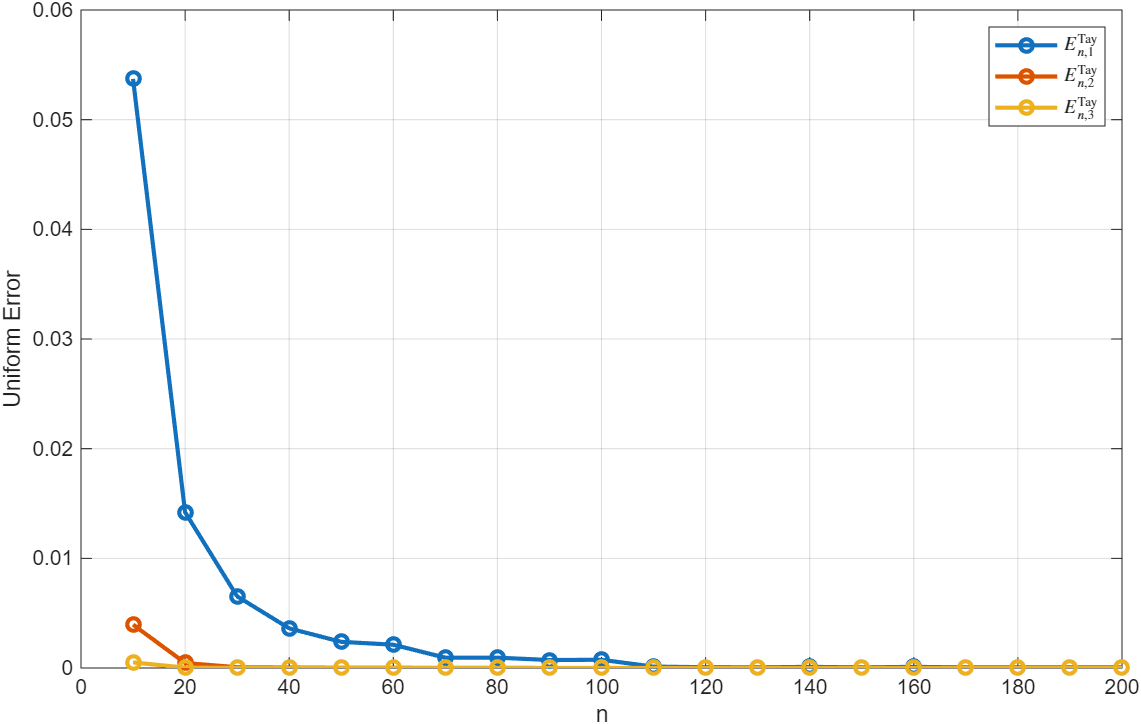}
\caption{Uniform approximation error
$E^{\mathrm{Tay}}_{n,r}=\|\widetilde{T}_{n,r}(f)-f\|_\infty$
as a function of $n$ for Taylor degrees $r=1,2,3$ on nested quasi-uniform irregular grids. Higher values of $r$ yield significantly improved
convergence rates.}
\label{fig:error_decay}
\end{figure}

To highlight the improvement with respect to the Lagrange-based construction, Figures~\ref{fig:comp_r1}-\ref{fig:comp_r3} compare the
Taylor and Lagrange interpolation operators for each fixed order $r=1,2,3$. In all
cases, the Taylor-accelerated interpolation operator provides smaller errors,
demonstrating the effectiveness of the local Taylor enrichment.

\begin{figure}[H]
\centering
\includegraphics[width=0.9\textwidth]{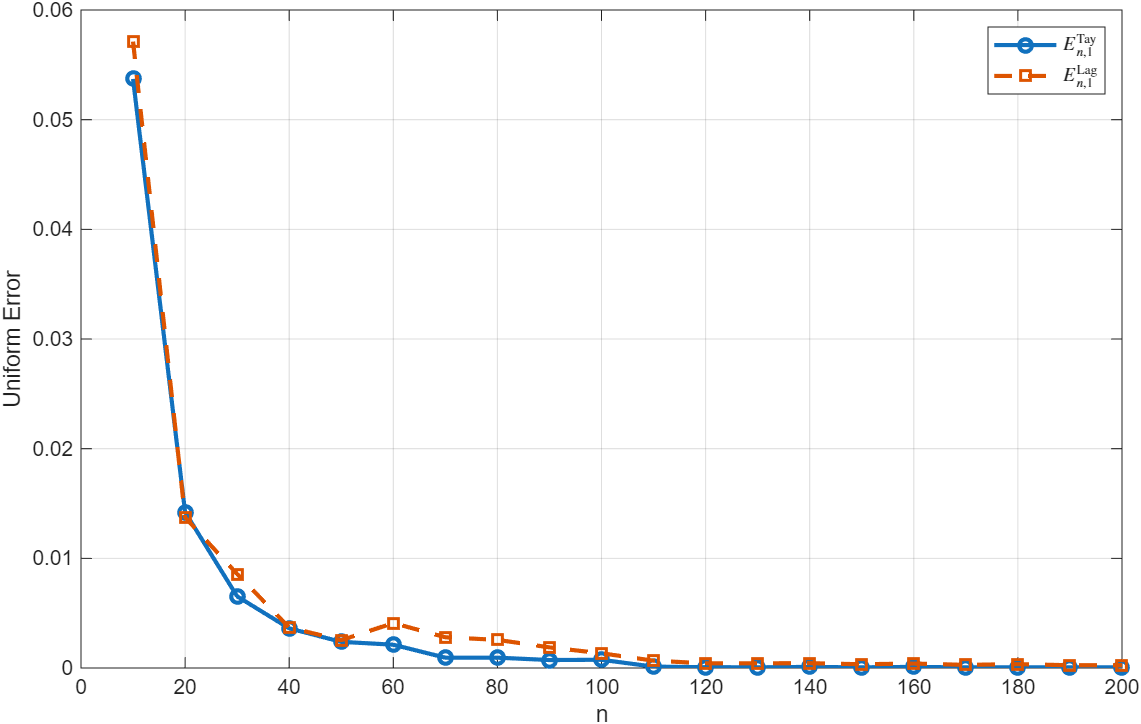}
\caption{Error comparison between Taylor and Lagrange neural network interpolation operators
for $r=1$.}
\label{fig:comp_r1}
\end{figure}

\begin{figure}[H]
\centering
\includegraphics[width=0.9\textwidth]{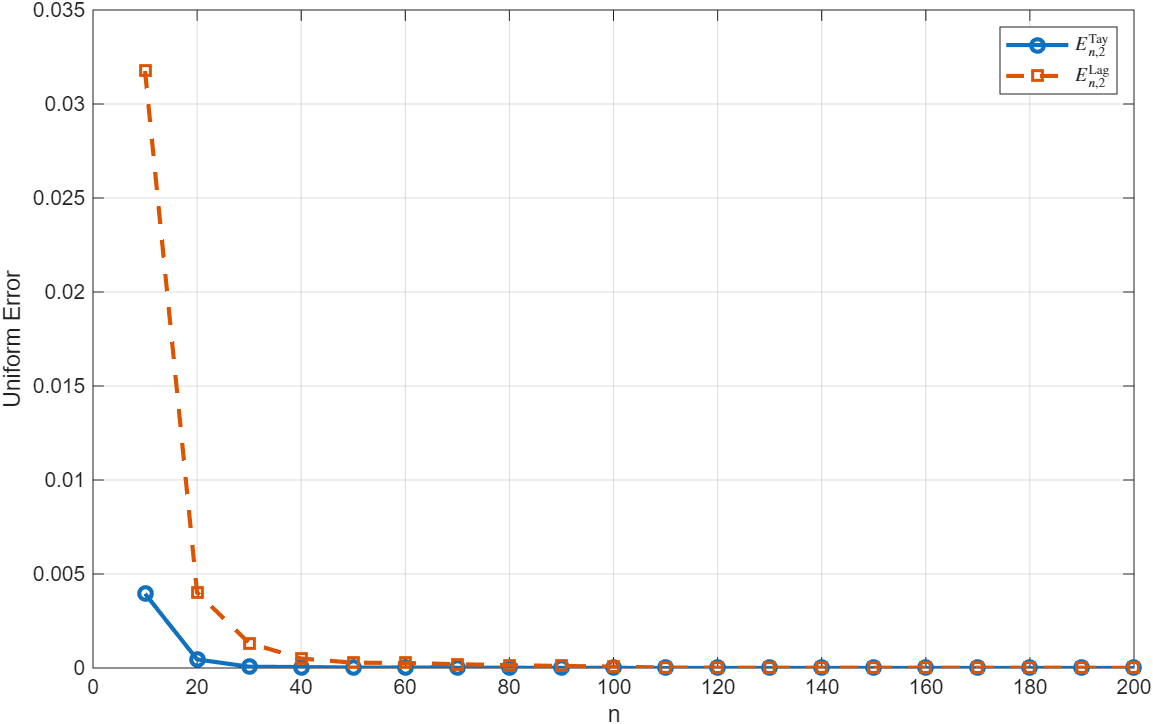}
\caption{Error comparison between Taylor and Lagrange neural network interpolation operators
for $r=2$.}
\label{fig:comp_r2}
\end{figure}

\begin{figure}[H]
\centering
\includegraphics[width=0.9\textwidth]{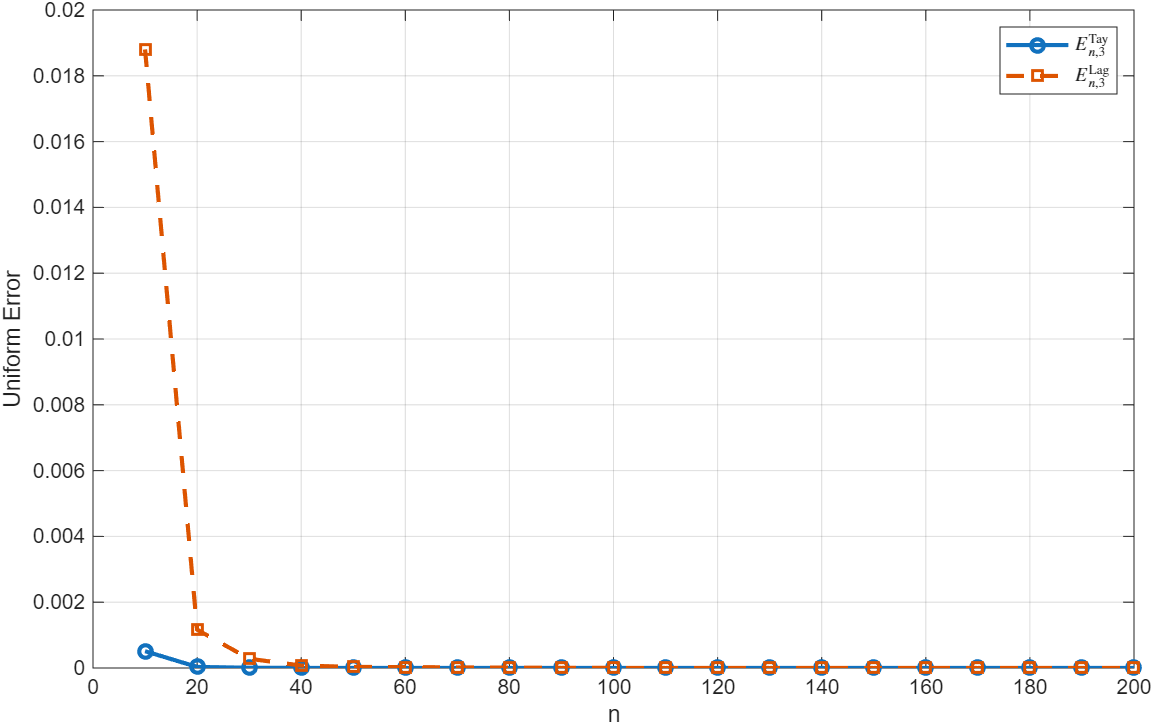}
\caption{Error comparison between Taylor and Lagrange neural network interpolation operators
for $r=3$.}
\label{fig:comp_r3}
\end{figure}







\subsection{Discussion}

The numerical results confirm the theoretical findings of the previous
sections. The proposed Taylor-accelerated neural network interpolation operators
achieve higher-order approximation rates on quasi-uniform irregular
grids, and the convergence order increases with the Taylor degree $r$.
Moreover, the comparative experiments clearly indicate that the
Taylor-accelerated construction provides improved convergence and
accuracy with respect to the Lagrange-based irregular grid operator of
Costarelli \emph{et al.}\cite{costarelli2025higher}, demonstrating the advantages of incorporating local Taylor information into neural network interpolation operators.

\section{Conclusion}

In this paper, we introduced a new class of Taylor-accelerated neural network interpolation operators defined on quasi-uniform irregular grids. By enriching sampling-based neural network interpolation with local Taylor polynomials, the proposed construction effectively exploits higher-order smoothness information of the target function.

We proved that the operators are well defined, uniformly bounded, and preserve exact interpolation at the grid nodes. Moreover, they reproduce polynomials up to the prescribed order and satisfy direct approximation estimates in terms of higher-order moduli of smoothness, leading to convergence rates of order $r+1$ for sufficiently smooth functions. In particular, under asymptotically quasi-uniform grids, the error decays as
\[
\|\widetilde{T}_{n,r}(g)-g\|_\infty = O\bigl(n^{-(r+1)}\bigr),
\qquad n\to\infty.
\]

Numerical experiments confirmed the theoretical findings and showed that the Taylor-accelerated operator provides improved accuracy compared not only with classical first-order neural network interpolation operators, but also with the recently proposed Lagrange-based neural network interpolation operator on irregular grids. This highlights the advantage of incorporating local Taylor information in achieving higher-order convergence on non-uniform sampling nodes.

Future research directions include adaptive irregular meshes and the development of stochastic or fuzzy variants of the proposed Taylor-accelerated framework.

\textbf{\large Declaration of competing interest}

The author declares that he has no competing financial interests or personal relationships that could influence the reported work in this paper.

\bibliographystyle{elsarticle-num}
\bibliography{Ref}
\end{document}